\newtheorem{cro}{Corollary}[section]
\newtheorem{defn}{Definition}[section]
\newtheorem{prop}{Proposition}[section]
\newtheorem{thm}{Theorem}[section]
\newtheorem{lem}{Lemma}[section]
\begin{document}

\title{The Bowen's topological entropy of the Cartesian product sets
 \footnotetext {* Corresponding author}
  \footnotetext {2010 Mathematics Subject Classification: 37B40, 28D20}}
\author{Xiaoyao Zhou $^{\rm a}$,  Ercai Chen$^{\rm a, b *}$ \\
 \small \it $^{\rm a}$ School of Mathematical Sciences and Institute of Mathematics, Nanjing Normal University,\\
  \small \it Nanjing 210023, Jiangsu, P.R.China\\
      \small \it $^{\rm b}$ Center of Nonlinear Science, Nanjing University,\\
       \small \it  Nanjing 210093, Jiangsu, P.R.China\\
        \small \it  e-mail: zhouxiaoyaodeyouxian@126.com,\\
         \small \it ecchen@njnu.edu.cn\\
}
\date{}
\maketitle

\begin{center}
\begin{minipage}{120mm}
{\small {\bf Abstract.} This article is devoted to showing the
product theorem for Bowen's topological entropy. }
\end{minipage}
\end{center}

\vskip0.5cm {\small{\bf Keywords and phrases:} Bowen's topological
entropy, packing topological entropy, upper capacity topological
entropy, product space.}\vskip0.5cm
\section{Introduction and Preliminaries}
The purpose of this article is to study the topological entropies of
product spaces. The product theorem for topological entropy of the
dynamical systems was first investigated by  Adler,  Konheim and
McAndrew \cite{AdlKonMca} and Goodywn  \cite{Goo}. One can see
\cite{Wal} for the product theorem for topological entropy of two
compact subsets. Bowen \cite{Bow} introduced the notion of
topological entropy for non-compact sets. A question arises
naturally whether the product theorem for Bowen's topological
entropy still holds. The reader is also referred to
\cite{BesMor,Fed,How,Kel,Mat,Mae}and references therein for the
investigation of dimension of product spaces.

 Throughout this article, a {\it
topological dynamical system} $(X,d,T)$ means a compact metric space
$(X,d)$ together with a continuous self-map $T:X\to X.$ Let $M(X)$
and $M(X,T)$ denote
 the sets of all Borel probability measures and
$T$-invariant Borel probability measures, respectively. For
$n\in\mathbb{N},$ the $n$-th {\it Bowen metric} $d_n$ on $X$ is
defined by

\begin{linenomath*}
 \begin{eqnarray*}
  d_n(x,y)=\max\{d(T^kx,T^ky):k=0,1,\cdots,n-1\}.
   \end{eqnarray*}
    \end{linenomath*}
For every $\epsilon>0,$ denote by
$B_n(x,\epsilon),\overline{B}_n(x,\epsilon)$ the open and closed
balls of radius $\epsilon$ order $n$ in the metric $d_n$ around $x,$
i.e.,

\begin{linenomath*}
 \begin{eqnarray*}
  B_n(x,\epsilon)=\{y\in X: d_n(x,y)<\epsilon\} {\rm~and~}
   \overline{B}_n(x,\epsilon)=\{y\in X: d_n(x,y)\leq\epsilon\}.
    \end{eqnarray*}
     \end{linenomath*}
Recently, given $\mu\in M(X),$ Feng and Huang \cite{FenHua} defined
the {\it measure-theoretical lower and upper entropies} of $\mu$
 respectively   by the idea analogous to Brin and Katok
\cite{BriKat} as follows.

\begin{defn}
Let $\underline{h}_\mu(T)=\int \underline{h}_\mu(T,x)d\mu(x)$ and
$\overline{h}_\mu(T)=\int \overline{h}_\mu(T,x)d\mu(x),$ where

\begin{linenomath*}
 \begin{eqnarray*}
  \underline{h}_\mu(T,x)=\lim\limits_{\epsilon\to0}\liminf\limits_{n\to\infty}-\frac{1}{n}\log\mu(B_n(x,\epsilon)),\\
   \overline{h}_\mu(T,x)=\lim\limits_{\epsilon\to0}\limsup\limits_{n\to\infty}-\frac{1}{n}\log\mu(B_n(x,\epsilon)).
    \end{eqnarray*}
     \end{linenomath*}
      \end{defn}
Brin and Katok \cite{BriKat} proved that for any $\mu\in M(X,T),
\overline{h}_\mu(T,x)=\underline{h}_\mu(T,x)$ for $\mu$-a.e. $x\in
X,$ and $\int \underline{h}_\mu(T,x) d\mu(x)=h_\mu(T).$ This implies
that for any $\mu\in M(X,T),
\underline{h}_\mu(T)=\overline{h}_\mu(T)=h_\mu(T).$

A set in a metric space is said to be {\it analytic} if it is a
continuous image of the set $\EuScript{N}$ of infinite sequences of
natural numbers (with its product topology).  It is well known that
in a Polish space, the analytic subsets are closed under countable
unions and intersections, and any Borel set is analytic (see
\cite{Fed}).


\section{Definitions of Topological Entropies and Main Theorem}
In this section, we recall three definitions of topological
entropies of subsets in a topological dynamical system: Bowen's
topological entropy, packing topological entropy and upper capacity
topological entropy. Since they are analogous to the definitions of
dimensions, they are called dimensional entropies.
\subsection{Bowen's topological entropy}

Bowen's topological entropy was first introduced  in \cite{Bow}.
Here we use an alternative way to define Bowen's topological entropy
for convenience. See \cite{Pes} for details.

Suppose $(X,d,T)$ is a topological dynamical system. Given $Z\subset
X, s\geq0, N\in\mathbb{N}$ and $\epsilon>0,$ define

\begin{linenomath*}
 \begin{eqnarray*}
  \EuScript{M}_{N,\epsilon}^s(Z)=\inf\sum\limits_i\exp(-sn_i),
   \end{eqnarray*}
    \end{linenomath*}
where the infimum ranges over all finite or countable families
$\{B_{n_i}(x_i,\epsilon)\}$ such that $x_i\in X, n_i\geq N$ and
$\bigcup_iB_{n_i}(x_i,\epsilon)\supset Z.$ Since
$\EuScript{M}_{N,\epsilon}^s(Z)$ does not decrease as $N$ increases
and $\epsilon$ decreases, the following two limits exist:

\begin{linenomath*}
 \begin{eqnarray*}
  \EuScript{M}_\epsilon^s(Z)=\lim\limits_{N\to\infty}\EuScript{M}_{N,\epsilon}^s(Z),~~~~\EuScript{M}^s(Z)=\lim\limits_{\epsilon\to0}\EuScript{M}_\epsilon^s(Z).
   \end{eqnarray*}
    \end{linenomath*}
The Bowen's topological entropy $h^B(Z)$ is defined as a critical
value of the parameter $s,$ where $\EuScript{M}^s(Z)$ jumps from
$\infty$ to 0, i.e.,

\begin{linenomath*}
 \begin{eqnarray*}
  \EuScript{M}^s(Z)=
   \left\{\begin{aligned}
0, ~~~&s>h^B(Z),\\
\infty,  ~~~&s<h^B(Z).
     \end{aligned}\right.
      \end{eqnarray*}
       \end{linenomath*}
\subsection{Packing topological entropy}
Packing topological entropy was defined by Feng and Huang
\cite{FenHua} in a way which resembles the packing dimension.
Nowadays, the packing topological entropy is widely believed as
important as the Bowen's topological entropy and  an understanding
of both the Bowen's topological entropy and the packing topological
entropy of a set provides the basis for a substantially better
understanding of the underlying geometry and dynamical behavior of
the set.

Given $Z\subset X, s\geq0, N\in\mathbb{N}$ and $\epsilon>0,$ define

\begin{linenomath*}
 \begin{eqnarray*}
  P^s_{N,\epsilon}(Z)=\sup\sum\limits_i\exp(-sn_i),
   \end{eqnarray*}
    \end{linenomath*}
where the supremum runs over all finite or countable pairwise
disjoint families $\{\overline{B}_{n_i}(x_i,\epsilon)\}$ such that
$x_i\in Z, n_i\geq N$ for all $i.$ Since $P^s_{N,\epsilon}(Z)$ does
not decrease as $N,\epsilon$ decrease, the following limit exists:

\begin{linenomath*}
 \begin{eqnarray*}
  P^s_{\epsilon}(Z)=\lim\limits_{N\to\infty}P^s_{N,\epsilon}(Z).
   \end{eqnarray*}
    \end{linenomath*}
Define

\begin{linenomath*}
 \begin{eqnarray*}
  \EuScript{P}^s_{\epsilon}(Z)=\inf\left\{\sum\limits_{i=1}^\infty
   P^s_{\epsilon}(Z_i):\bigcup\limits_{i=1}^\infty Z_i\supset
    Z\right\}.
    \end{eqnarray*}
     \end{linenomath*}
It is obvious that for $Z\subset\bigcup\limits_{i=1}^\infty Z_i,
\EuScript{P}^s_{\epsilon}(Z)\leq\sum\limits_{i=1}^\infty\EuScript{P}^s_{\epsilon}(Z_i).$
There exists a critical value of the parameter $s,$ denoted by
$h^P(Z,\epsilon),$ where $\EuScript{P}^s_{\epsilon}(Z)$ jumps from
$\infty$ to 0, i.e.,

\begin{linenomath*}
 \begin{eqnarray*}
  \EuScript{P}_\epsilon^s(Z)=
   \left\{\begin{aligned}
0, ~~~&s>h^P(Z,\epsilon),\\
\infty,  ~~~&s<h^P(Z,\epsilon).
 \end{aligned}\right.
  \end{eqnarray*}
   \end{linenomath*}
Since $h^P(Z,\epsilon)$ increases when $\epsilon$ decreases, we call

\begin{linenomath*}
 \begin{eqnarray*}
  h^P(Z):=\lim\limits_{\epsilon\to0}h^P(Z,\epsilon)
   \end{eqnarray*}
    \end{linenomath*}
the {\it packing topological entropy} of $Z.$

\noindent Remark that in the definition of
$\EuScript{P}_\epsilon^s(Z), \bigcup\limits_{i=1}^\infty Z_i\supset
Z$ can be replaced by $\bigcup\limits_{i=1}^\infty Z_i= Z.$

\subsection{Upper capacity topological entropy} Upper capacity
topological entropy is the straightforward generalization of the
Adler-Konheim-McAndrew definition of topological entropy to
arbitrary subsets.

Given a non-empty subset $Z\subset X.$ For $\epsilon>0,$ a set
$E\subset Z$ is called a $(n,\epsilon)$-{\it separated set} of $Z$
if $x,y\in E,x\neq y$ implies that $d_n(x,y)>\epsilon; E\subset X$
is called $(n,\epsilon)$-{\it spanning set} of $Z,$ if for any $x\in
Z,$ there exists $y\in E$ with $d_n(x,y)\leq \epsilon.$ Let
$r_n(Z,\epsilon)$ denote the largest cardinality of
$(n,\epsilon)$-separated sets for $Z,$ and
$\widetilde{r}_n(Z,\epsilon)$ the smallest cardinality of
$(n,\epsilon)$-spanning sets of $Z.$ The {\it upper capacity
topological entropy }of $Z$ is given by

\begin{linenomath*}
 \begin{eqnarray*}
  h^U(Z)=\lim\limits_{\epsilon\to0}\limsup\limits_{n\to\infty}\frac{1}{n}\log
   r_n(Z,\epsilon)=\lim\limits_{\epsilon\to0}\limsup\limits_{n\to\infty}\frac{1}{n}\log\widetilde{r}_n(Z,\epsilon).
   \end{eqnarray*}
    \end{linenomath*}
Some properties of topological entropies are presented as below.

\begin{prop}\label{prop}
 \begin{description}
  \item[(i)] For $Z\subset Z', h^B(Z)\leq h^B(Z'),h^P(Z)\leq h^P(Z'),h^U(Z)\leq h^U(Z').$
  \item[(ii)] For $Z\subset\bigcup\limits_{i=1}^\infty Z_i,s\geq0$ and
  $\epsilon>0,$ we have

\begin{linenomath*}
 \begin{eqnarray*}
  \EuScript{M}_\epsilon^s(Z)&\leq&\sum\limits_{i=1}^\infty
   \EuScript{M}_\epsilon^s(Z_i),\\
h^B(Z)&\leq&\sup\limits_{i\geq1}h^B(Z_i),\\
h^P(Z)&\leq&\sup\limits_{i\geq1}h^P(Z_i).
\end{eqnarray*}
 \end{linenomath*}
  \item[(iii)] For any $Z\subset X, h^B(Z)\leq h^P(Z)\leq h^U(Z).$
   \item[(iv)] If $Z$ is $T$-invariant and compact, then $h^B(Z)=h^P(Z)=h^U(Z).$
    \item[(v)] For $Z_1,Z_2\subset X,$ we have $h^U(Z_1\times Z_2)\leq h^U(Z_1)+h^U(Z_2).$
     \item[(vi)] For $Z_1,Z_2\subset X,$ we have $h^U(Z_1\cup Z_2)=\max\left\{h^U(Z_1),h^U(Z_2)\right\}.$
\end{description}
 \end{prop}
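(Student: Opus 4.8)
The plan is to prove the six items of Proposition~\ref{prop} essentially in the order listed, since several of the later items depend on the earlier ones. I will work throughout on the product system $(X\times X, d\times, T\times T)$ where $d\times$ is the maximum metric $d\times((x_1,x_2),(y_1,y_2))=\max\{d(x_1,y_1),d(x_2,y_2)\}$; the key elementary observation is the product formula for Bowen balls,
\begin{linenomath*}
\begin{equation*}
B^{T\times T}_n((x_1,x_2),\epsilon)=B^T_n(x_1,\epsilon)\times B^T_n(x_2,\epsilon),
\end{equation*}
\end{linenomath*}
and likewise for closed balls and for $(n,\epsilon)$-separated/spanning sets, which makes the covering and packing definitions behave well under products.

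For (i), monotonicity is immediate from the definitions: any admissible cover of $Z'$ restricts to an admissible cover of $Z$ (so $\EuScript{M}^s_{N,\epsilon}(Z)\le\EuScript{M}^s_{N,\epsilon}(Z')$, giving $h^B(Z)\le h^B(Z')$), any pairwise-disjoint closed-ball family with centers in $Z$ is also one with centers in $Z'$ (giving $h^P(Z,\epsilon)\le h^P(Z',\epsilon)$ and hence $h^P(Z)\le h^P(Z')$), and any $(n,\epsilon)$-separated subset of $Z$ is one of $Z'$ (giving $r_n(Z,\epsilon)\le r_n(Z',\epsilon)$ and $h^U(Z)\le h^U(Z')$). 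For (ii), the inequality $\EuScript{M}^s_\epsilon(Z)\le\sum_i\EuScript{M}^s_\epsilon(Z_i)$ follows by concatenating optimal covers: fix $\delta>0$, choose for each $i$ a cover of $Z_i$ with sum $\le\EuScript{M}^s_{N,\epsilon}(Z_i)+\delta2^{-i}$, take the union (still countable, still by Bowen balls of order $\ge N$), let $N\to\infty$, then $\delta\to0$. The entropy inequality $h^B(Z)\le\sup_i h^B(Z_i)$ then follows: if $s>\sup_i h^B(Z_i)$ then $\EuScript{M}^s_\epsilon(Z_i)=0$ for all $i$ and all $\epsilon$, so $\EuScript{M}^s_\epsilon(Z)=0$, hence $\EuScript{M}^s(Z)=0$ and $h^B(Z)\le s$. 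For $h^P(Z)\le\sup_i h^P(Z_i)$ one uses the countable-subadditivity of $\EuScript{P}^s_\epsilon$ already noted in the text (from the infimum over covers in its definition) together with the remark that the cover in the definition of $\EuScript{P}^s_\epsilon(Z_i)$ may be taken equal to $Z_i$: if $s>\sup_i h^P(Z_i)$ then $\EuScript{P}^s_\epsilon(Z_i)=0$, so $\EuScript{P}^s_\epsilon(Z)\le\sum_i\EuScript{P}^s_\epsilon(Z_i)=0$, giving $h^P(Z,\epsilon)\le s$ for every $\epsilon$ and hence $h^P(Z)\le s$.

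For (iii), the inequality $h^P(Z)\le h^U(Z)$ is the standard comparison: a maximal $(n,\epsilon)$-separated subset of $Z$ gives a pairwise-disjoint family of closed balls $\overline{B}_n(x_i,\epsilon/2)$, so $P^s_{N,\epsilon/2}(Z)\ge\sum_i\exp(-sn_i)$ over such families, and a counting argument (splitting the exponents $n_i\ge N$ into level sets and using $r_n(Z,\epsilon/2)$) shows $P^s_{\epsilon/2}(Z)=\infty$ whenever $s<\limsup_n\frac1n\log r_n(Z,\epsilon/2)$; since there is no further cover to take (one may cover $Z$ by $Z$ itself), $\EuScript{P}^s_{\epsilon/2}(Z)$ is then also positive, so $h^P(Z,\epsilon/2)\ge\limsup_n\frac1n\log r_n(Z,\epsilon/2)$, and letting $\epsilon\to0$ gives $h^P(Z)\le h^U(Z)$. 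The inequality $h^B(Z)\le h^P(Z)$ is proved by a Vitali-type covering argument: given a pairwise-disjoint maximal family $\{\overline{B}_{n_i}(x_i,\epsilon)\}$ with $x_i\in Z_j$, maximality forces $\{B_{n_i}(x_i,2\epsilon)\}$ to cover $Z_j$ when one works at the right scales, yielding $\EuScript{M}^s_{2\epsilon}(Z_j)\le$ (roughly) $P^s_\epsilon(Z_j)$, hence $\EuScript{M}^s_{2\epsilon}(Z)\le\sum_j P^s_\epsilon(Z_j)$, and taking the infimum over covers $\{Z_j\}$ gives $\EuScript{M}^s_{2\epsilon}(Z)\le\EuScript{P}^s_\epsilon(Z)$; comparing the critical values and letting $\epsilon\to0$ gives $h^B(Z)\le h^P(Z)$. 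I expect this Vitali/covering step to be the most delicate point, since one has to handle the fact that Bowen balls of different orders $n_i$ are not nicely nested and one cannot directly apply the classical Vitali covering lemma; the usual fix is to fix a target order $N$, select greedily among balls of order $\ge N$ with a doubling of the radius, and track the bookkeeping on the exponents.

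For (iv), when $Z$ is compact and $T$-invariant, $h^U(Z)$ coincides with the classical Adler--Konheim--McAndrew topological entropy of $T|_Z$ by definition, and a standard argument (e.g. as in \cite{Pes}) shows this classical entropy equals $h^B(Z)$ for compact invariant sets; combined with (iii) this forces $h^B(Z)=h^P(Z)=h^U(Z)$. For (v), using the product-ball identity for spanning sets, if $E_1$ is an $(n,\epsilon)$-spanning set of $Z_1$ and $E_2$ one of $Z_2$, then $E_1\times E_2$ is an $(n,\epsilon)$-spanning set of $Z_1\times Z_2$ in the product metric, so $\widetilde r_n(Z_1\times Z_2,\epsilon)\le\widetilde r_n(Z_1,\epsilon)\cdot\widetilde r_n(Z_2,\epsilon)$; taking $\frac1n\log$, using subadditivity of $\limsup$, and letting $\epsilon\to0$ gives $h^U(Z_1\times Z_2)\le h^U(Z_1)+h^U(Z_2)$. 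For (vi), $\ge$ is (i) applied to $Z_1,Z_2\subset Z_1\cup Z_2$; for $\le$, note $r_n(Z_1\cup Z_2,\epsilon)\le r_n(Z_1,\epsilon)+r_n(Z_2,\epsilon)\le 2\max\{r_n(Z_1,\epsilon),r_n(Z_2,\epsilon)\}$, and since $\frac1n\log 2\to0$ the $\limsup$ of $\frac1n\log r_n(Z_1\cup Z_2,\epsilon)$ is at most $\max$ of the two individual quantities (here one uses that $\limsup(a_n+b_n)\le\max\{\limsup a_n,\limsup b_n\}$ fails in general but $\limsup\frac1n\log(u_n+v_n)=\max\{\limsup\frac1n\log u_n,\limsup\frac1n\log v_n\}$ does hold for nonnegative $u_n,v_n$), then let $\epsilon\to0$.
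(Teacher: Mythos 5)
The paper itself only writes out proofs of (v) and (vi) and cites Feng--Huang for (i)--(iv); your treatments of (v) and (vi) coincide with the paper's (product spanning sets; the two-sided bound $\max\{r_n(Z_1,\epsilon),r_n(Z_2,\epsilon)\}\le r_n(Z_1\cup Z_2,\epsilon)\le 2\max\{r_n(Z_1,\epsilon),r_n(Z_2,\epsilon)\}$), and your arguments for (i), (ii) and (iv) are correct and more self-contained than the paper's citation.

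There is, however, a genuine error in your proof of $h^P(Z)\le h^U(Z)$ in (iii): the comparison runs in the wrong direction. Starting from a maximal $(n,\epsilon)$-separated set you build a \emph{disjoint} family of closed balls and deduce the \emph{lower} bound $P^s_{N,\epsilon/2}(Z)\ge r_n(Z,\epsilon)e^{-sn}$; this can only show that $P^s_{\epsilon/2}(Z)=\infty$ for $s$ below $\limsup_n\frac1n\log r_n(Z,\epsilon)$, i.e.\ it bounds the packing quantity from \emph{below} by something like $h^U$, which is the opposite of the claimed inequality, and your final sentence silently flips the inequality sign. Worse, the bridge ``$P^s_{\epsilon/2}(Z)=\infty$ implies $\EuScript{P}^s_{\epsilon/2}(Z)>0$ since there is no further cover to take'' is false: $\EuScript{P}^s_\epsilon$ is an infimum over \emph{all} countable covers, so one only has $\EuScript{P}^s_\epsilon(Z)\le P^s_\epsilon(Z)$, and $\EuScript{P}^s_\epsilon(Z)$ can vanish while $P^s_\epsilon(Z)=\infty$ --- that gap is precisely the point of the packing-type construction. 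The correct argument reverses the roles: for any pairwise disjoint family $\{\overline{B}_{n_i}(x_i,\epsilon)\}$ with $x_i\in Z$ and $n_i\ge N$, the centers with $n_i=k$ form a $(k,\epsilon)$-separated subset of $Z$ (if $d_k(x_i,x_j)\le\epsilon$ the two closed balls would meet at $x_j$), so there are at most $r_k(Z,\epsilon)$ of them; hence $P^s_{N,\epsilon}(Z)\le\sum_{k\ge N}r_k(Z,\epsilon)e^{-sk}$, which tends to $0$ as $N\to\infty$ once $s>\limsup_k\frac1k\log r_k(Z,\epsilon)$. Then $\EuScript{P}^s_\epsilon(Z)\le P^s_\epsilon(Z)=0$ (here the trivial cover of $Z$ by itself is legitimately used), so $h^P(Z,\epsilon)\le\limsup_k\frac1k\log r_k(Z,\epsilon)$, and letting $\epsilon\to0$ gives $h^P(Z)\le h^U(Z)$. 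Your sketch of $h^B\le h^P$ via a Vitali-type selection is the right idea, though, as you acknowledge, the bookkeeping with balls of different orders is where the real work lies.
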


\begin{proof}
(i)-(iv) can be seen in \cite{FenHua}. To see (v), for
$n\in\mathbb{N},\epsilon>0,$ suppose $S_1$ is an
$(n,\epsilon)$-spanning sets of $Z_1$ with minimal cardinality and
$S_2$ is an $(n,\epsilon)$-spanning sets of $Z_2$ with minimal
cardinality, then $S_1\times S_2$ is an $(n,\epsilon)$-spanning set
of $Z_1\times Z_2.$ This means $\widetilde{r}_n(Z_1\times
Z_2,\epsilon)\leq
\widetilde{r}_n(Z_1,\epsilon)\widetilde{r}_n(Z_2,\epsilon).$
Furthermore,

\begin{linenomath*}
 \begin{eqnarray*}
  h^U(Z_1\times
  Z_2)&=&\lim\limits_{\epsilon\to0}\limsup\limits_{n\to\infty}\frac{1}{n}\log\widetilde{r}_n(Z_1\times
  Z_2,\epsilon)\\
  &\leq&\lim\limits_{\epsilon\to0}\limsup\limits_{n\to\infty}\frac{1}{n}\log\widetilde{r}_n(Z_1,\epsilon)\widetilde{r}_n(Z_2,\epsilon)\\
   &\leq&\lim\limits_{\epsilon\to0}\limsup\limits_{n\to\infty}\frac{1}{n}\log\widetilde{r}_n(Z_1,\epsilon)+\lim\limits_{\epsilon\to0}\limsup\limits_{n\to\infty}\frac{1}{n}\log\widetilde{r}_n(Z_2,\epsilon)
    \\ &=&h^U(Z_1)+h^U(Z_2).
     \end{eqnarray*}
      \end{linenomath*}
(vi) follows from that for $n\in\mathbb{N},\epsilon>0,$

\begin{linenomath*}
 \begin{eqnarray*}
  \max\{r_n(Z_1,\epsilon), r_n(Z_2,\epsilon))\} &\leq &r_n(Z_1\cup
Z_2,\epsilon)\\ &\leq& r_n(Z_1,\epsilon)+r_n(Z_2,\epsilon)\\ &\leq&
2\max\left\{r_n(Z_1,\epsilon),r_n(Z_2,\epsilon)\right\}.
    \end{eqnarray*}
     \end{linenomath*}
      \end{proof}

The main result of this article is the following theorem.

\begin{thm}
Suppose $(X,d,T)$ is a topological dynamical system.
\begin{linenomath*}
 \begin{itemize}
  \item If $h^B(X)<\infty$ and $Z_1\subset X, Z_2\subset X$ are analytic, then
   \begin{equation*}
h^B(Z_1)+h^{B}(Z_2)\leq h^B(Z_1\times Z_2);
\end{equation*}
 \item If $Z_1\subset X, Z_2\subset X,$ then
  \begin{eqnarray*}
   h^B(Z_1\times Z_2)&\leq& h^B(Z_1)+h^{P}(Z_2);\\
    h^P(Z_1\times Z_2)&\leq& h^P(Z_1)+h^{P}(Z_2);\\
     h^U(Z_1\times Z_2)&\leq& h^U(Z_1)+h^{U}(Z_2).\\
\end{eqnarray*}
 \end{itemize}
  \end{linenomath*}
   \end{thm}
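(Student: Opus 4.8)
The plan is to treat the four inequalities separately, using the structural results of \cite{FenHua} as black boxes and keeping measures only for the lower bound.

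\textbf{Setup.} I would equip $X\times X$ with the max-metric $\bar d((x,x'),(y,y'))=\max\{d(x,y),d(x',y')\}$ and the map $T\times T$; the choice of product metric is irrelevant since all reasonable ones are pairwise bi-Lipschitz. The observation that makes everything tick is that the $n$-th Bowen metric of $(X\times X,\bar d,T\times T)$ is the maximum of the two pulled-back copies of $d_n$, so that
\[
B^{X\times X}_n\big((x,x'),\epsilon\big)=B_n(x,\epsilon)\times B_n(x',\epsilon),\qquad \overline{B}^{X\times X}_n\big((x,x'),\epsilon\big)=\overline{B}_n(x,\epsilon)\times\overline{B}_n(x',\epsilon).
\]
From \cite{FenHua} I would borrow three facts. (a) The decomposition formula $h^P(Z)=\inf\{\sup_{i\ge1}h^U(Z_i):Z\subseteq\bigcup_i Z_i\}$. (b) The Frostman-type lemma: if $h^B(X)<\infty$ and $Z$ is analytic with $h^B(Z)>s$, then there are $\mu\in M(X)$ and a Borel set $E\subseteq Z$ with $\mu(E)>0$ and $\underline{h}_\mu(T,x)\ge s$ for all $x\in E$ (shrinking $E$ to the Borel set where $x\mapsto\underline{h}_\mu(T,x)\ge s$ costs nothing). (c) The mass distribution principle: for any topological dynamical system $(Y,S)$, if $\nu\in M(Y)$, $E$ is Borel, $\nu(E)>0$ and $\underline{h}_\nu(S,x)\ge s$ on $E$, then $h^B(E)\ge s$.

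\textbf{Upper bounds.} The $h^U$ inequality is exactly Proposition~\ref{prop}(v). For $h^P$ (assume $h^P(Z_1),h^P(Z_2)<\infty$, else trivial): given $\delta>0$, use (a) to pick countable covers $Z_i\subseteq\bigcup_k Z_{i,k}$ with $\sup_k h^U(Z_{i,k})<h^P(Z_i)+\delta$; then $Z_1\times Z_2=\bigcup_{k,l}Z_{1,k}\times Z_{2,l}$ and Proposition~\ref{prop}(ii),(iii) together with Proposition~\ref{prop}(v) give $h^P(Z_1\times Z_2)\le\sup_{k,l}h^P(Z_{1,k}\times Z_{2,l})\le\sup_{k,l}\big(h^U(Z_{1,k})+h^U(Z_{2,l})\big)<h^P(Z_1)+h^P(Z_2)+2\delta$; let $\delta\downarrow0$. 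For $h^B(Z_1\times Z_2)\le h^B(Z_1)+h^P(Z_2)$ (assume $h^P(Z_2)<\infty$): decompose $Z_2=\bigcup_l Z_{2,l}$ via (a) with $\sup_l h^U(Z_{2,l})<h^P(Z_2)+\delta$; by Proposition~\ref{prop}(ii) it is enough to prove
\[
h^B(Z_1\times B)\le h^B(Z_1)+h^U(B)\qquad\text{for an arbitrary }B\subseteq X\text{ with }h^U(B)<\infty.
\]
For that, fix $s>h^B(Z_1)$ and $t'>h^U(B)$. Since $\widetilde{r}_n(B,\epsilon)$ is non-increasing in $\epsilon$, one has $\limsup_n\frac1n\log\widetilde{r}_n(B,\epsilon)\le h^U(B)<t'$ for every $\epsilon>0$, so $\widetilde{r}_n(B,\epsilon)\le e^{t'n}$ for all large $n$. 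Given $\epsilon>0$ and a threshold $N$, recall $\EuScript{M}^s(Z_1)=0$ (as $s>h^B(Z_1)$), which yields a cover $\{B_{n_i}(x_i,2\epsilon)\}$ of $Z_1$ with $n_i\ge N$ (taken large enough that $\widetilde{r}_{n_i}(B,\epsilon)\le e^{t'n_i}$) and $\sum_i e^{-sn_i}$ as small as desired; for each $i$ take a minimal $(n_i,\epsilon)$-spanning set $E_i$ of $B$, so that $\{B^{X\times X}_{n_i}((x_i,y),2\epsilon):i,\ y\in E_i\}$ covers $Z_1\times B$ and
\[
\sum_i\sum_{y\in E_i}e^{-(s+t')n_i}=\sum_i|E_i|\,e^{-(s+t')n_i}\le\sum_i e^{t'n_i}e^{-(s+t')n_i}=\sum_i e^{-sn_i}.
\]
Hence $\EuScript{M}^{s+t'}_{N,2\epsilon}(Z_1\times B)$ is arbitrarily small for every $N$ and every $\epsilon$, so $\EuScript{M}^{s+t'}(Z_1\times B)=0$, i.e. $h^B(Z_1\times B)\le s+t'$; letting $s\downarrow h^B(Z_1)$, $t'\downarrow h^U(B)$ proves the displayed inequality. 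Feeding it back, $h^B(Z_1\times Z_2)\le\sup_l\big(h^B(Z_1)+h^U(Z_{2,l})\big)\le h^B(Z_1)+h^P(Z_2)+\delta$, and $\delta\downarrow0$ finishes.

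\textbf{Lower bound.} Assume $h^B(X)<\infty$ and $Z_1,Z_2$ analytic, and fix $s_i<h^B(Z_i)$ (take $s_i=0$ when $h^B(Z_i)=0$). By (b) choose $\mu_i\in M(X)$ and Borel $E_i\subseteq Z_i$ with $\mu_i(E_i)>0$ and $\underline{h}_{\mu_i}(T,x)\ge s_i$ on $E_i$. Put $\mu=\mu_1\times\mu_2\in M(X\times X)$ and $E=E_1\times E_2\subseteq Z_1\times Z_2$, so $\mu(E)=\mu_1(E_1)\mu_2(E_2)>0$. By the product-ball identity, for $(x,y)\in E$,
\[
-\frac1n\log\mu\big(B^{X\times X}_n((x,y),\epsilon)\big)=-\frac1n\log\mu_1(B_n(x,\epsilon))-\frac1n\log\mu_2(B_n(y,\epsilon)),
\]
so taking $\liminf_n$ (using $\liminf(a_n+b_n)\ge\liminf a_n+\liminf b_n$) and then $\epsilon\downarrow0$ gives $\underline{h}_\mu(T\times T,(x,y))\ge\underline{h}_{\mu_1}(T,x)+\underline{h}_{\mu_2}(T,y)\ge s_1+s_2$ on $E$. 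The mass distribution principle (c), applied to $(X\times X,T\times T)$, then gives $h^B(E)\ge s_1+s_2$, whence $h^B(Z_1\times Z_2)\ge s_1+s_2$ by Proposition~\ref{prop}(i). Letting $s_i\uparrow h^B(Z_i)$ concludes.

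\textbf{Main obstacle.} The real content is ingredient (b): on an analytic set of prescribed Bowen entropy one must manufacture a probability measure whose Brin--Katok lower entropy is bounded below on a set of positive measure — this is precisely where the analyticity of the $Z_i$ and the hypothesis $h^B(X)<\infty$ are consumed. Everything else is bookkeeping; the only other point requiring care is the interplay of the scale $\epsilon$ and the order threshold $N$ in the covering argument for $h^B(Z_1\times B)$, where one must exploit that $\EuScript{M}^s(Z_1)=0$ forces vanishing at \emph{every} scale and \emph{every} threshold.
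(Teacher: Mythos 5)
Your proposal is correct and follows essentially the same route as the paper: the lower bound comes from Feng--Huang's variational principle for analytic sets together with the product-ball identity $B_n((x,y),\epsilon)=B_n(x,\epsilon)\times B_n(y,\epsilon)$, the bound $h^B(Z_1\times B)\le h^B(Z_1)+h^U(B)$ comes from combining a Bowen cover of $Z_1$ with spanning sets of $B$, and the $h^P$ statements follow from the decomposition formula $h^P(Z)=\inf\{\sup_i h^U(Z_i)\}$ (the paper's Proposition \ref{prop2}). The only (cosmetic) difference is that for the lower bound you invoke the pointwise Frostman-type lemma plus the mass distribution principle on $E_1\times E_2$, whereas the paper applies the variational principle in its integrated form to $K_1\times K_2$; these are interchangeable consequences of the same results in \cite{FenHua}.
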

\section{Proof of Main Theorem}
The main theorem is divided into several theorems, which are proved
respectively.
 The following lemma establishes the variational principles for Bowen and packing topological entropies
 of arbitrary Borel sets.

\begin{lem}{\rm \cite{FenHua}}\label{lemma}
Suppose $(X,d,T)$ is a topological dynamical system.

\begin{description}
 \item[(i)] If $K\subset X$ is non-empty and compact, then

  \begin{linenomath*}
   \begin{eqnarray*}
    h^B(K)=\sup\{\underline{h}_\mu(T):\mu\in M(X),\mu(K)=1\}.
     \end{eqnarray*}
      \end{linenomath*}
 \item[(ii)] Assume that $h^B(X)<\infty.$ If $Z\subset X$ is
  analytic, then

   \begin{linenomath*}
    \begin{eqnarray*}
     h^B(Z)=\sup\{h^B(K): K\subset Z {\rm~is~compact}\}.
      \end{eqnarray*}
       \end{linenomath*}
        \item[(iii)] If $K\subset X$ is non-empty and compact, then

\begin{linenomath*}
 \begin{eqnarray*}
  h^P(K)=\sup\{\overline{h}_\mu(T):\mu\in M(X),\mu(K)=1\}.
   \end{eqnarray*}
    \end{linenomath*}

  \item[(iv)] If $Z\subset X$ is
   analytic, then

   \begin{linenomath*}
      \begin{eqnarray*}
       h^P(Z)=\sup\{h^P(K): K\subset Z
    {\rm~is~compact}\}.
 \end{eqnarray*}
  \end{linenomath*}
   \end{description}
    \end{lem}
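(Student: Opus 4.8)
The plan is to treat the four parts as two pairs: the variational principles on compact sets, parts (i) and (iii), and the reduction of analytic sets to their compact subsets, parts (ii) and (iv). Each compact variational principle splits into an easy inequality, supplied by a mass distribution principle, and a hard inequality, supplied by a Frostman-type construction of a measure with prescribed decay on Bowen balls; the analytic-approximation statements then follow by a capacitability argument built on the compact case and the descriptive set theory of analytic sets recalled in the Introduction.

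For part (i), I would first prove $h^B(K)\ge\sup\{\underline h_\mu(T):\mu(K)=1\}$. Fix $\mu$ with $\mu(K)=1$ and take $s<\operatorname{ess\,sup}_\mu\underline h_\mu(T,\cdot)$. The level set $\{x:\underline h_\mu(T,x)>s\}$ has positive measure, and one may pass to a positive-measure subset $A$ on which $\liminf_n-\frac1n\log\mu(B_n(x,\epsilon))>s$ holds for one common $\epsilon>0$; a standard mass distribution principle then forces any countable Bowen $\epsilon$-cover of $A$ to satisfy $\sum_i\exp(-sn_i)\ge c\,\mu(A)$, so $\EuScript{M}^s(A)>0$ and $h^B(A)\ge s$. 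Monotonicity (Proposition~\ref{prop}(i)) gives $h^B(K)\ge h^B(A)\ge s$, and since the essential supremum dominates the integral $\underline h_\mu(T)$, letting $s$ approach the essential supremum yields the claim. The reverse inequality $h^B(K)\le\sup\{\underline h_\mu(T):\mu(K)=1\}$ is the crux: given $s<h^B(K)$ I would choose $\epsilon$ with $\EuScript{M}^s_{2\epsilon}(K)$ large and build, through finite-stage max-flow / weighted-covering estimates and a weak-$*$ limit, a Borel probability measure $\mu$ concentrated on $K$ with $\mu(B_n(x,\epsilon))\le C\,e^{-sn}$ for all $x$ and all large $n$. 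Such a bound gives $\liminf_n-\frac1n\log\mu(B_n(x,\epsilon))\ge s$ pointwise, hence $\underline h_\mu(T)\ge s$. This Frostman construction is the main obstacle, since one must control the ball-masses uniformly while keeping the total mass on $K$ bounded away from $0$.

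Part (iii) runs parallel, with three modifications. The local quantity is the upper entropy $\overline h_\mu(T,x)$ with a $\limsup$, the relevant set function is the packing premeasure $\EuScript{P}^s_\epsilon$ built from pairwise disjoint closed Bowen balls, and the covering step must be carried out with a Vitali-type (``$5r$'') selection so that a disjoint subfamily can be extracted from an arbitrary packing. The easy inequality again comes from a packing mass distribution principle applied to a positive-measure level set of $\overline h_\mu(T,\cdot)$, while the hard inequality constructs a measure with $\limsup_n-\frac1n\log\mu(B_n(x,\epsilon))\ge s$; here the extra infimum over countable decompositions in the definition of $\EuScript{P}^s_\epsilon$ makes the bookkeeping heavier than in (i).

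For parts (ii) and (iv) the inequality $\ge$ is immediate from monotonicity, so only $h^B(Z)\le\sup\{h^B(K):K\subset Z\text{ compact}\}$ and its packing analogue require proof. The strategy is to show that, for each fixed $s$, the set function $Z\mapsto\EuScript{M}^s(Z)$ (respectively its packing counterpart) is a Choquet capacity, so that every analytic set is capacitable, i.e.\ $\EuScript{M}^s(Z)=\sup\{\EuScript{M}^s(K):K\subset Z\text{ compact}\}$. Then if $s<h^B(Z)$ we have $\EuScript{M}^s(Z)=\infty$, which produces a compact $K\subset Z$ with $\EuScript{M}^s(K)$ large, whence $h^B(K)\ge s$. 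Verifying the capacity axioms—monotonicity, continuity along increasing sequences of arbitrary sets, and continuity along decreasing sequences of compacta—is the technical heart here; the hypothesis $h^B(X)<\infty$ in (ii) enters precisely to make the Bowen premeasure finite for $s>h^B(X)$ so that the capacitability theorem applies, whereas in (iv) the packing set function is already a metric outer measure built by an infimum over covers and needs no such finiteness assumption.
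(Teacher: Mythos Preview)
The paper does not prove this lemma at all: it is stated with the citation \cite{FenHua} and used as a black box, so there is no ``paper's own proof'' to compare against. Your outline is a reasonable sketch of how the result is actually established in Feng--Huang: mass-distribution/Frostman arguments for the compact variational principles (i) and (iii), and a capacitability-type inner approximation for (ii) and (iv). One technical remark: in \cite{FenHua} the inner-regularity step for Bowen entropy is not carried out by verifying the Choquet capacity axioms for $\EuScript{M}^s$ directly, but rather by passing through an auxiliary \emph{weighted} Bowen entropy for which capacitability is easier to check and which is then shown to coincide with $h^B$; your direct route would require more care, since continuity of $\EuScript{M}^s$ along increasing sequences of arbitrary sets is not obvious.
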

It is worth pointing out that the
 product metric in this article,  denoted by $\rho,$ on $(X\times X)$ is given by

\begin{linenomath*}
 \begin{eqnarray*}
  \rho((x_1,y_1),(x_2,y_2))=\max\{d(x_1,x_2),d(y_1,y_2)\}
   \end{eqnarray*}
    \end{linenomath*}
for $x_1,y_1,x_2,y_2\in X.$

Now, we firstly prove the following theorem.

\begin{thm}\label{theorem}
Suppose $(X,d,T)$ is a topological dynamical system.

\begin{itemize}
 \item If $h^B(X)<\infty$ and $Z_1\subset X, Z_2\subset X$ are analytic, then

\begin{linenomath*}
 \begin{equation*}
  h^B(Z_1)+h^{B}(Z_2)\leq h^B(Z_1\times Z_2).
   \end{equation*}
    \end{linenomath*}

  \item If $Z_1\subset X, Z_2\subset X,$ then $ h^B(Z_1\times Z_2)\leq
  h^B(Z_1)+h^{U}(Z_2).$
\end{itemize}

\end{thm}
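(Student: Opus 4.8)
The plan is to handle the two assertions separately: the lower bound $h^B(Z_1)+h^B(Z_2)\le h^B(Z_1\times Z_2)$ rests on the variational principles of Lemma~\ref{lemma} together with a product-measure argument, while the upper bound $h^B(Z_1\times Z_2)\le h^B(Z_1)+h^U(Z_2)$ is a direct covering estimate.

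\emph{Lower bound.} Since $h^B(X)<\infty$ and $Z_1,Z_2$ are analytic (hence non-empty), Lemma~\ref{lemma}(ii) reduces the problem to compact sets: it is enough to prove $h^B(K_1)+h^B(K_2)\le h^B(K_1\times K_2)$ for all non-empty compact $K_1\subset Z_1$ and $K_2\subset Z_2$, for then $h^B(K_1\times K_2)\le h^B(Z_1\times Z_2)$ by Proposition~\ref{prop}(i) and taking suprema over $K_1,K_2$ yields the claim. Given non-empty compact $K_1,K_2$, I would pick $\mu_1,\mu_2\in M(X)$ with $\mu_i(K_i)=1$ and set $\nu=\mu_1\times\mu_2\in M(X\times X)$, so $\nu(K_1\times K_2)=1$. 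The key observation is that the $n$-th Bowen metric of $(X\times X,\rho,T\times T)$ is again a maximum metric, so $B_n^\rho((x,y),\epsilon)=B_n(x,\epsilon)\times B_n(y,\epsilon)$ and $\nu(B_n^\rho((x,y),\epsilon))=\mu_1(B_n(x,\epsilon))\,\mu_2(B_n(y,\epsilon))$. Taking $-\frac1n\log$, using $\liminf_n(a_n+b_n)\ge\liminf_n a_n+\liminf_n b_n$ (all terms are non-negative, and this superadditivity is the inequality in the direction we want), and letting $\epsilon\to0$ gives $\underline{h}_\nu(T\times T,(x,y))\ge\underline{h}_{\mu_1}(T,x)+\underline{h}_{\mu_2}(T,y)$ for every $(x,y)$. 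Integrating against $\nu$ and invoking Tonelli's theorem yields $\underline{h}_\nu(T\times T)\ge\underline{h}_{\mu_1}(T)+\underline{h}_{\mu_2}(T)$. By Lemma~\ref{lemma}(i) applied to $(X\times X,\rho,T\times T)$ one gets $h^B(K_1\times K_2)\ge\underline{h}_\nu(T\times T)$, and taking the supremum over $\mu_1$ and $\mu_2$, together with Lemma~\ref{lemma}(i) for $K_1$ and for $K_2$, gives $h^B(K_1\times K_2)\ge h^B(K_1)+h^B(K_2)$.

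\emph{Upper bound.} Assume $Z_1,Z_2\neq\emptyset$, fix $s>h^B(Z_1)$ and $t>h^U(Z_2)$, and aim to show $\EuScript{M}^{s+t}(Z_1\times Z_2)=0$, after which letting $s\downarrow h^B(Z_1)$ and $t\downarrow h^U(Z_2)$ finishes the proof. From $s>h^B(Z_1)$ one gets $\EuScript{M}^s(Z_1)=0$, hence $\EuScript{M}^s_{N,\epsilon}(Z_1)=0$ for all $N,\epsilon$ (the quantity is monotone in $N$ and in $\epsilon$ with limit $0$). From $t>h^U(Z_2)$ one gets, for each $\epsilon>0$, that $\limsup_n\frac1n\log\widetilde{r}_n(Z_2,\epsilon)\le h^U(Z_2)<t$, so there is $N_0(\epsilon)$ with $\widetilde{r}_n(Z_2,\epsilon)\le e^{tn}$ for all $n\ge N_0(\epsilon)$. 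Now fix $\epsilon>0$, $N\ge N_0(\epsilon)$, $\eta>0$; choose a countable cover $\{B_{n_i}(x_i,\epsilon)\}$ of $Z_1$ with $n_i\ge N$ and $\sum_i e^{-sn_i}<\eta$, and for each $i$ an $(n_i,\epsilon)$-spanning set $F_i\subset X$ of $Z_2$ with $|F_i|=\widetilde{r}_{n_i}(Z_2,\epsilon)\le e^{tn_i}$. Then $\{B_{n_i}^\rho((x_i,y),2\epsilon):i\in\mathbb{N},\,y\in F_i\}$ covers $Z_1\times Z_2$: if $(a,b)\in Z_1\times Z_2$, pick $i$ with $a\in B_{n_i}(x_i,\epsilon)$ and $y\in F_i$ with $d_{n_i}(b,y)\le\epsilon$, so $\rho_{n_i}((a,b),(x_i,y))\le\epsilon<2\epsilon$. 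Moreover $\sum_i\sum_{y\in F_i}e^{-(s+t)n_i}=\sum_i|F_i|e^{-(s+t)n_i}\le\sum_i e^{-sn_i}<\eta$, so $\EuScript{M}^{s+t}_{N,2\epsilon}(Z_1\times Z_2)\le\eta$. Letting $\eta\to0$, then $N\to\infty$, then $\epsilon\to0$ gives $\EuScript{M}^{s+t}(Z_1\times Z_2)=0$.

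\emph{Main obstacle.} Given Lemma~\ref{lemma} neither part is deep; the points that need care are, in the lower bound, the measurability of $(x,y)\mapsto\underline{h}_\nu(T\times T,(x,y))$ on $X\times X$ (inherited from the Feng--Huang framework applied to the product system), which licenses Tonelli's theorem, together with the observation that $\liminf$ of a sum dominates the sum of the $\liminf$s; and in the upper bound, the careful bookkeeping of the nested limits $\eta\to0$, $N\to\infty$, $\epsilon\to0$ with $N_0$ depending on $\epsilon$. One should also record that $(X\times X,\rho,T\times T)$ is itself a topological dynamical system and that, by the upper bound applied to $Z_1=Z_2=X$ and Proposition~\ref{prop}(iv), $h^B(X\times X)\le h^B(X)+h^U(X)=2h^B(X)<\infty$, so every quantity appearing in the lower-bound argument is finite.
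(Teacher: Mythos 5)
Your proposal is correct and follows essentially the same route as the paper: the lower bound via the Feng--Huang variational principles applied to product measures on products of compact subsets, and the upper bound by covering $Z_1$ with an efficient family of Bowen balls and $Z_2$ with roughly $e^{tn}$ balls of the same order. The only cosmetic difference is that you use minimal $(n,\epsilon)$-spanning sets where the paper uses maximal disjoint families of closed balls (which it then fattens to radius $2\epsilon+\delta$); the bookkeeping is the same, and your version even sidesteps the paper's unqualified claim that $R_n(Z_2,\epsilon)e^{-ns}\le 1$ for all $n$.
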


\begin{proof}
(i) Firstly, we show that $h^B(Z_1)+h^{B}(Z_2)\leq h^B(Z_1\times
Z_2).$ It follows from Lemma \ref{lemma} that for any $\zeta>0,$
there exist $K_1\subset Z_1,\mu_1\in M(X), K_2\subset Z_2,\mu_2\in
M(X)$ such that

\begin{itemize}
 \item $K_1$ and $K_2$ are compact.
  \item $\mu_1(K_1)=1$ and $\mu_2(K_2)=1.$
   \item $h^B(Z_1)\leq \underline{h}_{\mu_1}(T)+\zeta/2$ and $h^B(Z_2)\leq \underline{h}_{\mu_2}(T)+\zeta/2.$
    \end{itemize}
Then $K_1\times K_2\subset Z_1\times Z_2$ is compact,
$\mu_1\times\mu_2\in M(X\times X),\mu_1\times\mu_2(K_1\times K_2)=1$
and

\begin{linenomath*}
 \begin{eqnarray*}
h^B(Z_1\times Z_2) &\geq & \underline{h}_{\mu_1\times \mu_2}(T\times
T)\\&=&\int
 \underline{h}_{\mu_1\times\mu_2}(T\times
 T,(x,y))d\mu_1\times\mu_2(x,y)\\
 &=&\int\lim\limits_{\epsilon\to0}\liminf\limits_{n\to\infty}-\frac{1}{n}\log\mu_1\times\mu_2
 B_n((x,y),\epsilon) d\mu_1\times\mu_2(x,y)\\
 &=&\int\lim\limits_{\epsilon\to0}\liminf\limits_{n\to\infty}-\frac{1}{n}\log\mu_1\times\mu_2
 B_n(x,\epsilon)\times  B_n(y,\epsilon) d\mu_1\times\mu_2(x,y)\\
 &\geq&\int\lim\limits_{\epsilon\to0}\liminf\limits_{n\to\infty}-\frac{1}{n}\log\mu_1
 B_n(x,\epsilon)+ \lim\limits_{\epsilon\to0}\liminf\limits_{n\to\infty}-\frac{1}{n}\log\mu_2 B_n(y,\epsilon)
 d\mu_1\times\mu_2(x,y)\\&=&\underline{h}_{\mu_1}(T)+\underline{h}_{\mu_2}(T)\\
 &\geq& h^B(Z_1)+h^B(Z_2)-\zeta.
\end{eqnarray*}
 \end{linenomath*}
Letting $\zeta\to0,$ we get the desired result.

Secondly, we prove that $h^B(Z_1\times Z_2)\leq
h^B(Z_1)+h^{U}(Z_2).$

Let $Z_2\subset X$ and assume $s>h^U(Z_2).$ For any $n\in\mathbb{N}$
and $\epsilon>0,$ let $R=R_n(Z_2,\epsilon)$ be the largest number so
that there is a disjoint family
$\{\overline{B}_n(x_i,\epsilon)\}_{i=1}^R$ with $x_i\in Z_2.$ Then
it is easy to see that for any $\delta>0,$

\begin{linenomath*}
 \begin{eqnarray*}
  \bigcup\limits_{i=1}^R B_n(x_i,2\epsilon+\delta)\supset
Z_2,
   \end{eqnarray*}
    \end{linenomath*}
which implies that $\EuScript{M}_{n,2\epsilon+\delta}^s(Z_2)\leq
R\exp(-ns)\leq 1.$ Let $t>h^B(Z_1),$ then $0=\EuScript{M}^t(Z_1)\geq
\EuScript{M}_\epsilon^t(Z_1) \geq 0.$ Hence, for any $\xi>0,$ there
is a finite or countable family $\{B_{n_i}(x_i,\epsilon)\}$ such
that $x_i\in X, n_i\geq N, \bigcup_i B_{n_i}(x_i,\epsilon)\supset
Z_1$ and $\sum\limits_i\exp(-tn_i)<\xi.$ For each $i,$ we can cover
$Z_2$ with $R_{n_i}(Z_2,\epsilon)$ balls $B_{i,j},$ of order $n_i$
and radius $2\epsilon+\delta, j=1,2,\cdots,R_{n_i}(Z_2,\epsilon).$
Then the sets $B_{n_i}(x_i,2\epsilon+\delta)\times B_{i,j}$ together
cover $Z_1\times Z_2.$ We have

\begin{linenomath*}
 \begin{eqnarray*}
  \EuScript{M}^{s+t}_{N,2\epsilon+\delta}(Z_1\times
Z_2)&\leq&\sum\limits_{i}\exp(-n_i(s+t))R_{n_i}(Z_2,\epsilon)\\
&=&\sum\limits_{i}\exp(-n_is)\exp(-n_it)R_{n_i}(Z_2,\epsilon)\\
&\leq&\xi.
\end{eqnarray*}
 \end{linenomath*}
This implies that $h^B(Z_1\times Z_2)\leq s+t.$ Letting $t\to
h^B(Z_2),s\to h^U(Z_1),$ this completes the proof.
\end{proof}
A question arises naturally whether $h^U$ can be replaced by $h^P$
in Theorem \ref{theorem}. For this purpose, we present an equivalent
definition of packing topological entropy in the following
proposition.

\begin{prop}\label{prop2}
$h^P(Z)=\inf\left\{\sup\limits_i h^U(Z_i): Z=\bigcup\limits_i Z_i
\right\}.$
 \end{prop}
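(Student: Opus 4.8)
The plan is to prove both inequalities separately. For the inequality $h^P(Z) \leq \inf\{\sup_i h^U(Z_i) : Z = \bigcup_i Z_i\}$, fix any countable cover $Z = \bigcup_i Z_i$ and set $s = \sup_i h^U(Z_i)$. I want to show $h^P(Z) \leq s$. By Proposition \ref{prop}(ii), $h^P(Z) \leq \sup_i h^P(Z_i)$, so it suffices to show $h^P(Z_i) \leq h^U(Z_i)$ for each $i$, and this is exactly Proposition \ref{prop}(iii). Hence $h^P(Z) \leq s$, and taking the infimum over all covers gives the $\leq$ direction. (This direction is essentially immediate from the already-proved properties.)

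The reverse inequality $h^P(Z) \geq \inf\{\sup_i h^U(Z_i) : Z = \bigcup_i Z_i\}$ is the substantive part. Here I need to produce, for an arbitrary $Z$, a countable decomposition $Z = \bigcup_i Z_i$ with $\sup_i h^U(Z_i) \leq h^P(Z)$. The natural strategy is: for each $\epsilon > 0$, recall that $h^P(Z,\epsilon)$ is the critical exponent for $\EuScript{P}^s_\epsilon$, and when $s > h^P(Z,\epsilon)$ we have $\EuScript{P}^s_\epsilon(Z) = 0$, which by definition of $\EuScript{P}^s_\epsilon$ as an infimum over covers gives, for any $\eta > 0$, a decomposition $Z = \bigcup_i Z_i$ with $\sum_i P^s_\epsilon(Z_i) < \eta$, in particular $P^s_\epsilon(Z_i) < \infty$ for all $i$. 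The point of $P^s_\epsilon(Z_i) < \infty$ is that it controls the exponential growth rate of $R_n(Z_i,\epsilon)$ (the maximal number of disjoint $\overline{B}_n(\cdot,\epsilon)$-balls centered in $Z_i$): indeed $P^s_{N,\epsilon}(Z_i) \geq R_n(Z_i,\epsilon)\exp(-sn)$ for any fixed $n \geq N$, so $\limsup_n \frac1n \log R_n(Z_i,\epsilon) \leq s$. Since $R_n(Z_i,\epsilon)$ dominates the cardinality of a maximal $(n,\epsilon)$-separated set up to the usual comparison between separated and packing counts, this bounds $\limsup_n \frac1n \log r_n(Z_i, \epsilon) \leq s$ — but only at scale $\epsilon$, not in the limit as the scale goes to $0$, which is what $h^U$ requires.

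The main obstacle is precisely this: a single cover tuned to one scale $\epsilon$ controls only $r_n(\cdot,\epsilon)$, whereas $h^U$ needs control at all small scales simultaneously. The remedy is a diagonal refinement over $\epsilon \to 0$. I would take a sequence $\epsilon_k \downarrow 0$ and $s_k \downarrow h^P(Z)$ with $s_k > h^P(Z,\epsilon_k)$, obtain for each $k$ a countable cover $Z = \bigcup_j Z^{(k)}_j$ with $P^{s_k}_{\epsilon_k}(Z^{(k)}_j) < \infty$ for all $j$, and then form the common refinement: the sets $W_{\mathbf{j}} = \bigcap_k Z^{(k)}_{j_k}$ indexed by $\mathbf{j} = (j_k)_k$. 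This is still a countable family covering $Z$ (countability needs a small argument — restrict to the countably many tuples that actually arise, or use that a countable intersection of countable covers can be re-indexed countably by a standard bookkeeping), and each $W_{\mathbf{j}}$ is contained in some $Z^{(k)}_{j_k}$, so $\limsup_n \frac1n\log r_n(W_{\mathbf{j}},\epsilon_k) \leq s_k$ for every $k$. Therefore $h^U(W_{\mathbf{j}}) = \lim_{k}\limsup_n \frac1n\log r_n(W_{\mathbf{j}},\epsilon_k) \leq \lim_k s_k = h^P(Z)$. Taking the supremum over $\mathbf{j}$ gives $\sup_{\mathbf{j}} h^U(W_{\mathbf{j}}) \leq h^P(Z)$, hence the infimum over all decompositions is $\leq h^P(Z)$, completing the proof. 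The two delicate points to write carefully are the countability of the refined family and the uniform passage $P^{s_k}_{\epsilon_k}(Z^{(k)}_j)<\infty \Rightarrow \limsup_n \frac1n \log r_n(Z^{(k)}_j, \epsilon_k) \le s_k$ using the separated-versus-packing comparison.
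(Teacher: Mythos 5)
Your ``$\leq$'' direction is correct and is in fact more economical than the paper's: $h^P(Z)\le\sup_i h^P(Z_i)\le\sup_i h^U(Z_i)$ follows immediately from parts (ii) and (iii) of Proposition \ref{prop}, whereas the paper proves this inequality by a direct counting argument (splitting a near-optimal packing according to the orders $n_j$ of its balls). Your comparison $r_n(B,2\epsilon)e^{-sn}\le P^s_{N,\epsilon}(B)$ and the deduction $P^s_\epsilon(Z_i)<\infty\Rightarrow\limsup_n\frac1n\log r_n(Z_i,2\epsilon)\le s$ are also fine.

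The ``$\geq$'' direction, however, has a genuine gap, located exactly at the step you defer as ``a small argument'': the common refinement $W_{\mathbf j}=\bigcap_k Z^{(k)}_{j_k}$ is indexed by $\mathbb{N}^{\mathbb{N}}$, and the set of indices $\mathbf j$ with $W_{\mathbf j}\neq\emptyset$ is in general \emph{uncountable}. Already for countably many covers each consisting of two disjoint pieces, the nonempty cells of the common refinement can be in bijection with $\{0,1\}^{\mathbb{N}}$; no re-indexing or bookkeeping makes such a family countable. Countability is essential here, since both $\EuScript{P}^s_\epsilon$ and the right-hand side of the proposition are defined through countable covers only. Worse, if the covers $\{Z^{(k)}_j\}_j$ happen to be partitions, then any set lying inside some $Z^{(k)}_{j_k}$ for every $k$ lies inside a single cell of the refinement, so when the refinement has uncountably many nonempty cells there is \emph{no} countable cover of $Z$ of the kind your argument requires. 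Thus the diagonalization as stated does not produce an admissible decomposition. The difficulty you correctly identify --- that a cover extracted from $\EuScript{P}^a_\epsilon(Z)=0$ is tuned to the single scale $\epsilon$ and only controls $r_n(\cdot,2\epsilon)$, while $h^U$ demands control at all scales --- is the real crux of this direction (the paper's own argument obtains its bound from one scale $\epsilon$ and then passes to $\epsilon\to0$ with the cover held fixed, which deserves scrutiny for the same reason, since $P^a_\epsilon$ \emph{increases} as $\epsilon$ decreases). To repair your approach you would need to coordinate the choices of the covers across scales, or exploit additional structure (for instance, for expansive $T$ the quantity $\limsup_n\frac1n\log r_n(\cdot,\epsilon)$ stabilizes once $\epsilon$ is below the expansivity constant, and then a single scale suffices); as written, the step from the scale-by-scale covers to one countable cover with $\sup_{\mathbf j}h^U(W_{\mathbf j})\le h^P(Z)$ is not justified.
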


\begin{proof}
 Given
$a>h^P(Z)=\lim\limits_{\epsilon\to0}h^P(Z,\epsilon)=\lim\limits_{\epsilon\to0}\inf\left\{s:\EuScript{P}_\epsilon^s(Z)=0\right\}.$
Since $h^P(Z,\epsilon)$ increases as $\epsilon$ decreases, we have
$\EuScript{P}_\epsilon^a(Z)=0.$ Furthermore, there exists
$\{Z_i\}_{i=1}^\infty$ such that $\bigcup\limits_{i=1}^\infty Z_i=
Z$ and $\sum\limits_{i=1}^\infty P_\epsilon^a(Z_i)<1.$ This implies
that $P_\epsilon^a(Z_i)<1,$ for  such $\{Z_i\}_{i=1}^\infty.$ Since
for $N\in\mathbb{N},\epsilon>0, s>0$ and any subset $B\subset X,$ we
have $r_N(B,2\epsilon)\exp(-Ns)\leq P^s_{N,\epsilon}(B).$ Then for
any $Z_i\in\{Z_i\}_{i=1}^\infty,$

\begin{linenomath*}
 \begin{eqnarray*}
  \lim\limits_{\epsilon\to0}
   \limsup\limits_{N\to\infty}r_N(Z_i,2\epsilon)\exp(-Na) &\leq&
    \lim\limits_{\epsilon\to0} \limsup\limits_{N\to\infty}
P_{N,\epsilon}^a(Z_i)\\&=&\lim\limits_{\epsilon\to0}P^a_\epsilon(Z_i)\leq1.
\end{eqnarray*}
 \end{linenomath*}
This implies that
$\lim\limits_{\epsilon\to0}\limsup\limits_{N\to\infty}\frac{\log
r_N(Z_i,2\epsilon)}{N}\leq a,$ i.e., $h^U(Z_i)\leq a.$ Furthermore,

\begin{linenomath*}
 \begin{eqnarray*}
  \inf\left\{\sup\limits_{i\geq1}h^U(Z_i):\bigcup\limits_{i=1}^\infty
   Z_i=Z\right\}\leq\sup\limits_{i\geq1} h^U(Z_i)\leq a.
    \end{eqnarray*}
     \end{linenomath*}
Letting $a\to h^P(Z),$ we have

\begin{linenomath*}
 \begin{eqnarray*}
  \inf\left\{\sup\limits_{i\geq1}h^U(Z_i):\bigcup\limits_{i=1}^\infty
   Z_i=Z\right\}\leq h^P(Z).
    \end{eqnarray*}
     \end{linenomath*}

To prove the opposite inequality, let $0<t<s<h^P(Z).$ Given
$\{Z_i\}_{i=1}^\infty$ such that $\bigcup\limits_{i=1}^\infty
Z_i=Z,$ it is enough to show that $h^U(Z_i)\geq t$ for some $i.$
Since $h^P(Z)=\lim\limits_{\epsilon\to0}h^P(Z,\epsilon),$ there
exists $\epsilon^*>0$ such that for any $0<\epsilon<\epsilon^*,
h^P(Z,\epsilon)>s.$ Then $\inf\left\{\sum\limits_{i=1}^\infty
P_\epsilon^s(Z_i):\bigcup\limits_{i=1}^\infty
Z_i=Z\right\}=\EuScript{P}_\epsilon^s(Z)>0.$ Furthermore, there
exist $\alpha$ and $i,$ such that $P_\epsilon^s(Z_i)>\alpha>0.$
Since $P_{N,\epsilon}^s(Z_i)$ decreases as $N$ increases, we have $
P_{N,\epsilon}^s(Z_i)>\alpha.$ There exist $N_1$ and a finite or
countable pairwise disjoint family
$\{\overline{B}_{n_j}(x_j,\epsilon)\}_{j=1}^\infty$ such that
$x_j\in Z_i,n_j\geq N_1$ for all $j$ and
$\sum\limits_{j=1}^\infty\exp(-sn_j)>\alpha.$ For each
$k\in\mathbb{N},$ let $m_k$ be the number of $j$ such that $n_j=k.$
Then we have $\sum\limits_{k=N_1}^\infty m_k\exp(-ks)>\alpha.$ This
yields for some $N\geq N_1, m_N\geq \exp(Nt)(1-\exp(t-s))\alpha,$
since otherwise

\begin{linenomath*}
 \begin{equation*}
  \sum\limits_{j=1}^\infty\exp(-sn_j)=\sum\limits_{k=N_1}^\infty
   m_k\exp(-ks)<\sum\limits_{k=0}^\infty
    \exp(k(t-s))(1-\exp(t-s))\alpha=\alpha.
     \end{equation*}
      \end{linenomath*}
Furthermore,

\begin{linenomath*}
 \begin{eqnarray*}
  h^U(Z_i)=\lim\limits_{\epsilon\to0}\limsup\limits_{N\to\infty}\frac{1}{N}\log
  r_N(Z_i,\epsilon)\geq \limsup\limits_{N\to\infty}\frac{1}{N}\log
  m_N\geq t.
   \end{eqnarray*}
    \end{linenomath*}
This completes the proof.
\end{proof}

\begin{thm}
Suppose $(X,d,T)$ is a topological dynamical system. If $Z_1\subset
X, Z_2\subset X$, then

\begin{linenomath*}
 \begin{equation*}
  h^B(Z_1\times Z_2)\leq h^B(Z_1)+h^{P}(Z_2).
   \end{equation*}
    \end{linenomath*}
\end{thm}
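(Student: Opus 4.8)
The plan is to reduce the statement about arbitrary sets to the compact case via the decomposition characterization of packing entropy just proved, and then apply the already-established inequality $h^B(Z_1\times Z_2)\le h^B(Z_1)+h^U(Z_2)$ together with the subadditivity of $h^B$ under countable unions. Concretely, fix $s>h^P(Z_2)$. By Proposition \ref{prop2} there is a decomposition $Z_2=\bigcup_{i=1}^\infty Z_{2,i}$ with $\sup_i h^U(Z_{2,i})\le s$, i.e. $h^U(Z_{2,i})\le s$ for every $i$. Then $Z_1\times Z_2=\bigcup_{i=1}^\infty (Z_1\times Z_{2,i})$, so by Proposition \ref{prop}(ii) we have
\begin{linenomath*}
\begin{eqnarray*}
h^B(Z_1\times Z_2)\le \sup_{i\ge1} h^B(Z_1\times Z_{2,i}).
\end{eqnarray*}
\end{linenomath*}

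Next I would bound each term using the second bullet of Theorem \ref{theorem}: for every $i$,
\begin{linenomath*}
\begin{eqnarray*}
h^B(Z_1\times Z_{2,i})\le h^B(Z_1)+h^U(Z_{2,i})\le h^B(Z_1)+s.
\end{eqnarray*}
\end{linenomath*}
Taking the supremum over $i$ and combining with the previous display gives $h^B(Z_1\times Z_2)\le h^B(Z_1)+s$. Since $s>h^P(Z_2)$ was arbitrary, letting $s\to h^P(Z_2)$ yields the claimed inequality $h^B(Z_1\times Z_2)\le h^B(Z_1)+h^P(Z_2)$.

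The only technical caveat to watch is the degenerate case $h^P(Z_2)=\infty$, where the inequality is vacuous, and the case where $Z_2$ (or $Z_1$) is empty, where the product is empty and all entropies are $0$ by convention; these should be dispatched first. A second point worth checking is that Theorem \ref{theorem}'s second bullet is stated for arbitrary subsets $Z_1,Z_2\subset X$ with no analyticity or finiteness hypothesis, so applying it to the pieces $Z_1\times Z_{2,i}$ is legitimate even though the $Z_{2,i}$ coming out of the decomposition in Proposition \ref{prop2} need not be analytic. I do not anticipate a genuine obstacle here: the proof is essentially a two-line assembly of Proposition \ref{prop2}, Proposition \ref{prop}(ii), and Theorem \ref{theorem}, so the ``hard part'' is really just making sure the order of limits (first $s\to h^P(Z_2)$ from above, after the supremum over the countable decomposition) is carried out cleanly and that the countable-union subadditivity of $h^B$ is invoked before, not after, the product bound is applied to each piece.
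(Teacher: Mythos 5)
Your argument is correct and is essentially the paper's own proof: both decompose $Z_2=\bigcup_i Z_{2,i}$ via Proposition \ref{prop2}, apply the countable-union bound of Proposition \ref{prop}(ii) to $Z_1\times Z_2=\bigcup_i(Z_1\times Z_{2,i})$, and then use the bound $h^B(Z_1\times Z_{2,i})\leq h^B(Z_1)+h^U(Z_{2,i})$ from Theorem \ref{theorem}. The only cosmetic difference is that you fix $s>h^P(Z_2)$ and choose a near-optimal decomposition, whereas the paper bounds over an arbitrary decomposition and then takes the infimum.
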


\begin{proof}
Given $\{Z_2^i\}_{i=1}^\infty$ such that
$\bigcup\limits_{i=1}^\infty Z_2^i=Z_2.$

\begin{linenomath*}
 \begin{eqnarray*}
  h^B(Z_1\times Z_2)
  &=& h^B(Z_1\times\bigcup\limits_{i=1}^\infty Z_2^i)\\
  &=& h^B(\bigcup\limits_{i=1}^\infty(Z_1\times Z_2^i))\\
  &=& \sup\limits_{i\geq1}h^B(Z_1\times Z_2^i)\\
  &\leq& h^B(Z_1)+\sup\limits_{i\geq1}h^U(Z_i).
    \end{eqnarray*}
     \end{linenomath*}
This together with Proposition \ref{prop2} implies that
$h^B(Z_1\times Z_2)\leq h^B(Z_1)+h^P(Z_2).$
\end{proof}

\begin{thm}
Suppose $(X,d,T)$ is a topological dynamical system. If $Z_1\subset
X, Z_2\subset X$, then

\begin{linenomath*}
 \begin{equation*}
  h^P(Z_1\times Z_2)\leq h^P(Z_1)+h^{P}(Z_2).
   \end{equation*}
    \end{linenomath*}
     \end{thm}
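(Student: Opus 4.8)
The plan is to deduce this from the characterization of the packing entropy given in Proposition \ref{prop2} together with the sub-additivity of upper capacity entropy on products, Proposition \ref{prop}(v). Recall that Proposition \ref{prop2} asserts $h^P(Z)=\inf\left\{\sup_i h^U(Z_i):Z=\bigcup_i Z_i\right\}$, the infimum being over all countable decompositions of $Z$. So the strategy is: take near-optimal countable covers of $Z_1$ and $Z_2$ in the sense of this infimum, form the product cover of $Z_1\times Z_2$, estimate each product piece by Proposition \ref{prop}(v), and feed the result back into Proposition \ref{prop2} for $Z_1\times Z_2$.

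Concretely, I would first reduce to the case $h^P(Z_1)<\infty$ and $h^P(Z_2)<\infty$, since otherwise the right-hand side is infinite and there is nothing to prove. Fix $\eta>0$. By Proposition \ref{prop2} there are countable covers $Z_1=\bigcup_{i=1}^\infty A_i$ and $Z_2=\bigcup_{j=1}^\infty B_j$ with $\sup_i h^U(A_i)\leq h^P(Z_1)+\eta$ and $\sup_j h^U(B_j)\leq h^P(Z_2)+\eta$. Then $\{A_i\times B_j\}_{i,j\geq1}$ is again a countable family, and $\bigcup_{i,j}(A_i\times B_j)=\left(\bigcup_i A_i\right)\times\left(\bigcup_j B_j\right)=Z_1\times Z_2$, so it is admissible in the infimum defining $h^P(Z_1\times Z_2)$. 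By Proposition \ref{prop}(v),
\[
h^U(A_i\times B_j)\leq h^U(A_i)+h^U(B_j)\leq h^P(Z_1)+h^P(Z_2)+2\eta
\]
for all $i,j$, whence, applying Proposition \ref{prop2} to $Z_1\times Z_2$,
\[
h^P(Z_1\times Z_2)\leq\sup_{i,j\geq1}h^U(A_i\times B_j)\leq h^P(Z_1)+h^P(Z_2)+2\eta .
\]
Letting $\eta\to0$ gives the desired inequality.

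Once Proposition \ref{prop2} is available, the argument is essentially formal, so I do not expect a genuine obstacle here; the only points needing a moment's attention are the trivial reduction to finite packing entropies and the observation that the product of two countable families is countable (and exhausts $Z_1\times Z_2$), so that it is a legitimate competitor in the infimum for $h^P(Z_1\times Z_2)$. As an alternative one could bypass Proposition \ref{prop2} and work directly with the set functions $\EuScript{P}^s_\epsilon$, covering each piece $A_i\times B_j$ by products of pairwise disjoint balls and exploiting the identity $B^\rho_n((x,y),\epsilon)=B_n(x,\epsilon)\times B_n(y,\epsilon)$ forced by the product metric $\rho$; but the route through Proposition \ref{prop2} is markedly shorter, so I would present that one.
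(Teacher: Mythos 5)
Your proposal is correct and is precisely the argument the paper intends: its proof of this theorem is the one-line "This follows from Proposition \ref{prop2} and (v) of Proposition \ref{prop}," and your write-up simply fills in the details of that same deduction (near-optimal countable covers, product cover, subadditivity of $h^U$ on products). No differences worth noting.
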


\begin{proof}
This follows from Proposition \ref{prop2} and (v) of Proposition
\ref{prop}.
\end{proof}

\begin{cro}
For any $A\subset X,$ and $\epsilon>0$ there exists an increasing
sequence $A_1\subset A_2\subset\cdots\subset A$   such that
$A=\bigcup\limits_{i=1}^\infty A_i$ and $h^U(A_i)\leq
h^P(A)+\epsilon.$
\end{cro}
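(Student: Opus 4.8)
The plan is to read this off directly from the alternative description of packing entropy established in Proposition \ref{prop2}, namely $h^P(A)=\inf\left\{\sup_i h^U(Z_i):A=\bigcup_i Z_i\right\}$, combined with part (vi) of Proposition \ref{prop}, which says that $h^U$ of a finite union equals the maximum of the $h^U$'s of the pieces. The corollary is essentially the statement that the infimum in Proposition \ref{prop2} can be realized along an \emph{increasing} exhausting sequence, and the bridge from an arbitrary cover to an increasing one is exactly the finite-union identity for $h^U$.

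First I would dispose of the degenerate case: if $h^P(A)=\infty$ the claim is trivial, since one may take $A_i=A$ for every $i$. So assume $h^P(A)<\infty$ and let $\epsilon>0$ be given. By Proposition \ref{prop2} there is a countable cover $A=\bigcup_{i=1}^\infty B_i$ with $\sup_{i\ge 1}h^U(B_i)\le h^P(A)+\epsilon$ (approximating the infimum by $\epsilon$). Then I would set $A_n:=\bigcup_{i=1}^{n}B_i$ for $n\in\mathbb{N}$. By construction $A_1\subset A_2\subset\cdots\subset A$ and $\bigcup_{n=1}^\infty A_n=\bigcup_{i=1}^\infty B_i=A$, so it only remains to bound $h^U(A_n)$. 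Applying part (vi) of Proposition \ref{prop} and an immediate induction on $n$ gives $h^U(A_n)=\max_{1\le i\le n}h^U(B_i)\le\sup_{i\ge 1}h^U(B_i)\le h^P(A)+\epsilon$, which is exactly the asserted estimate.

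I do not expect a genuine obstacle here; the proof is a two-line deduction once Proposition \ref{prop2} is in hand. The only subtlety worth flagging is that the cover produced by Proposition \ref{prop2} need not be nested, but replacing it by its partial unions $A_n$ repairs this at no cost, precisely because $h^U$ of a finite union is the maximum of the individual entropies (part (vi) of Proposition \ref{prop}), so the supremum controlling the $h^U(A_n)$ is unchanged. I would also note in passing that this corollary is what makes the phrase ``increasing sequence'' in later applications legitimate: any set can be exhausted from below by sets whose upper capacity entropy is arbitrarily close to its packing entropy.
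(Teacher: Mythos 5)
Your proof is correct and uses the same two ingredients as the paper: Proposition \ref{prop2} to produce a countable cover with $\sup_i h^U(B_i)\leq h^P(A)+\epsilon$, and part (vi) of Proposition \ref{prop} to pass to the increasing partial unions $A_n=\bigcup_{i=1}^n B_i$ without increasing the entropy bound. If anything, your version is executed more carefully than the paper's own sketch, which selects single sets at scales $\epsilon/2^k$ and leaves the exhaustion $\bigcup_i A_i=A$ implicit, whereas your partial-union construction makes it immediate.
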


\begin{proof}
By Proposition \ref{prop2}, for $\epsilon,$ there exists $A_1$ such
that $h^U(A_1)<h^P(A)+\epsilon.$ Then for $\epsilon/2,$ there exists
$A'_2$ such that $h^U(A'_2)<h^P(A)+\epsilon/2.$ Let $A_2=A_1\cup
A'_2.$ Then $h^U(A_2)=\max\{h^U(A_1),h^U(A'_2)\}<h^P(A)+\epsilon.$
We can construct a sequence $\{A_i\}_{i=1}^\infty$ like this. Such
$\{A_i\}_{i=1}^\infty$ is desired.
\end{proof}
It is worth pointing out that the above results hold for two
different topological dynamical systems, i.e., suppose
$(X_1,d_1,T_1),(X_2,d_2,T_2)$ are two topological dynamical systems
and the product metric  $\rho$ on $(X_1\times X_2,T_1\times T_2)$ is
given by
$\rho((x_1,y_1),(x_2,y_2))=\max\{d_1(x_1,x_2),d_2(y_1,y_2)\}$ for
any $x_1,y_1\in X_1,x_2,y_2\in X_2.$

\begin{itemize}
 \item If  $h^B(X_1)<\infty$ and $h^B(X_2)<\infty,$ and $Z_1\subset
X_1,Z_2\subset X_2$ are analytic, then

\begin{linenomath*}
 \begin{eqnarray*}
  &h^B(Z_1)+h^B(Z_2) \leq h^B(Z_1\times Z_2);
   \end{eqnarray*}
    \end{linenomath*}
     \item If $Z_1\subset X_1,Z_2\subset X_2,$ then

\begin{linenomath*}
 \begin{eqnarray*}
  h^B(Z_1\times Z_2) &\leq&
  h^B(Z_1)+h^P(Z_2);\\
  h^P(Z_1\times Z_2)&\leq& h^P(Z_1)+h^P(Z_2);\\
  h^U(Z_1\times Z_2)&\leq& h^U(Z_1)+h^U(Z_2).
  \end{eqnarray*}
   \end{linenomath*}
    \end{itemize}
This together with (iv) of Proposition \ref{prop} leads to the
following corollary.

\begin{cro}
Suppose $(X_1,d_1,T_1)$ and $(X_2,d_2,T_2)$ are two topological
dynamical systems. If $Z_1\subset X_1, Z_2\subset X_2$ are analytic
and $Z_2$ is $T_2$-invariant and compact (or $Z_1$ is
$T_1$-invariant and compact), then $h^B(Z_1\times
Z_2)=h^B(Z_1)+h^B(Z_2).$
\end{cro}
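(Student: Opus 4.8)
The plan is to read the identity off the two product inequalities of the main theorem (in their two-system form), using Proposition~\ref{prop}(iv) to collapse the gap between $h^B$ and $h^P$ on the compact invariant factor. Assume first that $Z_2$ is $T_2$-invariant and compact; the alternative hypothesis reduces to this one, since the flip $(x,y)\mapsto(y,x)$ is an isometric conjugacy from $(X_1\times X_2,\rho,T_1\times T_2)$ onto $(X_2\times X_1,\rho,T_2\times T_1)$ carrying $Z_1\times Z_2$ to $Z_2\times Z_1$, hence preserves all the entropies while merely swapping the two coordinates. By Proposition~\ref{prop}(iv), $h^B(Z_2)=h^P(Z_2)=h^U(Z_2)$. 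The second product inequality then gives
\[
h^B(Z_1\times Z_2)\ \le\ h^B(Z_1)+h^P(Z_2)\ =\ h^B(Z_1)+h^B(Z_2),
\]
and the first gives $h^B(Z_1)+h^B(Z_2)\le h^B(Z_1\times Z_2)$; together these yield the equality.

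The only point needing care is that the first (``$\ge$'') product inequality was established under the standing hypotheses that $Z_1,Z_2$ be analytic (granted) and that $h^B(X_1),h^B(X_2)<\infty$. For the second factor this is harmless: since $Z_2$ is a compact $T_2$-invariant set, $(Z_2,d_2|_{Z_2},T_2|_{Z_2})$ is itself a topological dynamical system, and the Bowen entropy of any subset of $Z_2$ --- in particular of $Z_1\times Z_2$ viewed inside $X_1\times Z_2$ --- does not change if the ambient system of the second coordinate is taken to be $(Z_2,T_2)$ instead of $(X_2,T_2)$, because a cover by Bowen balls with centres in $X_2$ may be recentred inside $Z_2$ at the cost of replacing $\epsilon$ by $2\epsilon$, which washes out as $\epsilon\to0$. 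So we may assume $X_2=Z_2$, whence $h^B(X_2)=h^U(Z_2)$; moreover with $X_2=Z_2$ compact and invariant the compact set $K_2$ required in the proof of the first inequality can simply be taken to be $Z_2$ itself, so Lemma~\ref{lemma}(ii) is not even needed for the second coordinate. Also, the asserted identity is trivial whenever one of the entropies is infinite: for any $z_2\in Z_2$, projecting a Bowen-ball cover of $Z_1\times\{z_2\}$ onto the first coordinate yields a cover of $Z_1$ with the same exponents, so $h^B(Z_1)\le h^B(Z_1\times\{z_2\})\le h^B(Z_1\times Z_2)$ by Proposition~\ref{prop}(i), and symmetrically $h^B(Z_2)\le h^B(Z_1\times Z_2)$; hence if $h^B(Z_1)=\infty$ or $h^U(Z_2)=\infty$ then $h^B(Z_1\times Z_2)=\infty=h^B(Z_1)+h^B(Z_2)$. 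In the remaining case $h^B(Z_1)$, $h^B(Z_2)$, $h^B(X_2)$ are all finite, and the first inequality applies --- its use of $h^B(X_1)<\infty$ being confined to the compact exhaustion of $Z_1$ in Lemma~\ref{lemma}(ii), so one either carries $h^B(X_1)<\infty$ as a hypothesis or notes that this exhaustion already holds once $h^B(Z_1)<\infty$, which is the situation at hand.

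The main obstacle is nothing more than this bookkeeping around the finiteness hypotheses, together with justifying the passage to the subsystem $(Z_2,T_2)$: there is no new quantitative content beyond the main theorem, the single genuinely new ingredient being Proposition~\ref{prop}(iv), which is exactly what forces $h^B$, $h^P$ and $h^U$ to agree on the compact invariant factor and so pinches the sandwich shut. Concretely I would present the two-line squeeze first, under the blanket assumption that all quantities are finite and $X_2=Z_2$, and then append one short paragraph reducing the general case to that special situation as above.
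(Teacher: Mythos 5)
Your argument is correct and is essentially the paper's own: the corollary is obtained by squeezing $h^B(Z_1\times Z_2)$ between the two product inequalities, with Proposition~\ref{prop}(iv) supplying $h^P(Z_2)=h^B(Z_2)$ on the compact invariant factor (and the obvious flip symmetry handling the other case). Your extra bookkeeping is sensible --- the paper's statement silently drops the finiteness hypotheses $h^B(X_i)<\infty$ that its lower inequality requires --- but the one soft spot is your parenthetical claim that the compact exhaustion of Lemma~\ref{lemma}(ii) already holds assuming only $h^B(Z_1)<\infty$; that is not established in the paper or the cited lemma, so you should take the other branch of your hedge and simply carry $h^B(X_1)<\infty$ as a standing hypothesis.
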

At last, we give an example as follows.

\textbf{Example:} We take a topological dynamical system $(X,T)$ and
$D=\left\{\frac{1}{n}\right\}_{n\in\mathbb{N}}\cup\{0\}$ and let
$Z=X\times D.$ Define $R:Z\to Z$ satisfying
$R(x,\frac{1}{n+1})=(x,\frac{1}{n}), n\in\mathbb{N}; R(x,1)=(Tx,1)$
and $R(x,0)=(x,0)$ for $x\in X.$ Then $(Z,R)$ is a topological
dynamical system. If we identity $(x,1)$ with $x$ for each $x\in X,$
then $X$ can be viewed as a closed subset of $Z$ and $R|_X=T.$ Since
$h^P(D)\leq\max \left\{
\sup\limits_{n\in\mathbb{N}}h^{U}(\{\frac{1}{n}\}),h^U(\{0\})\right\}=0,$
we have $h^B(Z)=h^B(X).$


\noindent {\bf Acknowledgements.}  The research was supported by the
National Natural Science Foundation of China (Grant No. 11271191)
and National Basic Research Program of China (Grant No.
2013CB834100) and the Foundation for Innovative Program of Jiangsu
Province (Grant No. CXZZ12 0380).

\end{document}